\def\1{\mathbf 1}
\def\R{\mathbb{R}}
\newtheorem{theorem}{Theorem}
\newtheorem{proposition}{Proposition}
\newtheorem{corollary}{Corollary}
\newtheorem{lemma}{Lemma}
\title{The Laplacian matrix of weighted threshold graphs}
\author{Yingyue Ke\thanks{Faculty of Electrical Engineering,
Mathematics and Computer Science, P.O Box 5031, 2600 GA Delft, The Netherlands; \emph{email}: y.y.ke@tudelft.nl}, ~Willem H. Haemers\thanks{Tilburg University, Tilburg, The Netherlands; \emph{email}: haemers@tilburguniversity.edu} 
\ and Piet Van Mieghem\thanks{Faculty of Electrical Engineering,
Mathematics and Computer Science, P.O Box 5031, 2600 GA Delft, The Netherlands; \emph{email}: P.F.A.VanMieghem@tudelft.nl} }
\begin{document}
\maketitle

\begin{abstract}

Threshold graphs are generated from one node by repeatedly adding a node that links to all existing nodes or adding a node without links.   
In the weighted threshold graph, we add a new node in step $i$, which is linked to all existing nodes by a link of weight $w_i$.
In this work, we consider the set ${\cal A}_N$ that contains all Laplacian matrices of weighted threshold graphs of order $N$. 
We show that ${\cal A}_N$ forms a commutative algebra.
Using this, we find a common basis of eigenvectors for the matrices in ${\cal A}_N$.
It follows that the eigenvalues of each matrix in ${\cal A}_N$ can be represented as a linear transformation of the link weights.
In addition, we prove that, if there are just three or fewer different weights, two weighted threshold graphs with the same Laplacian spectrum must be isomorphic.

\noindent \textbf{Keywords} threshold graphs, Laplacian matrix, commutative algebra, cospectral graphs
\end{abstract}

\section{Introduction}\label{sec1}

The adjacency matrix $A$ of an unweighted graph $G$ with nodes $\{1,\ldots,N\} $ is a matrix matrix $N \times N$ with elements $a_{ij}$, where $a_{ij}=1$ if there is a link between node $i$ and node $j$, otherwise $a_{ij}=0$. 
We use notation and notions from  \cite{van2023graph}.
For a weighted graph each link $\{i,j\}$ has a weight $a_{ij}\in\R$.
We indentify a non-link with a weight zero link.
Then the elements of the (weighted) adjacency matrix $A$ are the link weights $a_{ij}$.
The Laplacian matrix of a (weighted) graph is defined by $Q=\Delta-A$, where $\Delta = {\rm{diag}}(d_1, d_2,...,d_N )$ and $d_i = \sum_{j=1}^N a_{ij}$ is the (weighted) degree of node $i$.  
The Laplacian matrix necessarily has a zero eigenvalue because its row sum is zero.

A threshold graph is a graph obtained from one node by repeatedly adding an isolated node or a dominant node, where an isolated node is not connected to any other node in a graph and a dominating node is a node that links to all other nodes in a graph. 
The concept of threshold graphs is introduced independently in \cite{hammer1977aggregation} and \cite{henderson1977graph}.
A comprehensive review of threshold graphs is provided in \cite{mahadev1995threshold}.
Formulas for the Laplacian spectrum and the number of spanning trees in a threshold graph is given in \cite{hammer1996laplacian}.
The application of threshold graphs in building real-world networks is discussed in \cite{bustos2012dynamics, konig2014nestedness, liu2014control}.

In the weighted threshold graph, we start with one node and add a new node in step $i$ ($i=2,\ldots,N$), which is linked to all existing nodes by a link of weight $w_i\in\R$.
We label the nodes as $1,2,3,\ldots,N$ according to the order in which the nodes are added. 
Then the $N \times N$ adjacency matrix $A$ of a weighted threshold graph equals
\begin{equation}
        A=\begin{bmatrix}
            0 & w_{2} & w_{3} & \cdots & \cdots & w_{N} \\
            w_{2} & 0 & w_{3} & \cdots & \cdots & w_{N} \\
            w_{3} & w_{3} & 0 & \cdots & \cdots & w_{N} \\
            \vdots & \vdots & \vdots & \ddots &  & \vdots \\
            \vdots & \vdots & \vdots &  & \ddots &  w_{N}\\
            w_{N} & w_{N} & w_{N} & \cdots & w_{N} & 0\\
    \end{bmatrix}
    \label{eq:wa}
\end{equation}
Thus the vector $W=(w_2,w_3, \ldots, w_N)$ 
determines the weighted threshold graph.
We call $W$ the weight vector and write $G_W$ for the corresponding weighted threshold graph.

The row sums of the adjacency matrix $A$ in Eq.~(\ref{eq:wa}) give the degrees of the nodes in $G_W$, i.e., 
\begin{equation}\label{eq:wd}
    d_i = (i-1)w_i + \sum^N_{j=i+1}w_j, ~\text{for}~1\leq i \leq N .
\end{equation}
The Laplacian matrix $\Delta-A$ of $G_W$ is written as 
\begin{equation}
        Q_W=\begin{bmatrix}
            d_1 & -w_{2} & -w_{3} & \cdots & \cdots & -w_{N} \\
            -w_{2} & d_2 & -w_{3} & \cdots & \cdots & -w_{N} \\
            -w_{3} & -w_{3} & d_3 & \cdots & \cdots & -w_{N} \\
            \vdots & \vdots & \vdots & \ddots &  & \vdots \\
            \vdots & \vdots & \vdots &  & \ddots &  -w_{N}\\
            -w_{N} & -w_{N} & -w_{N} & \cdots & -w_{N} & d_N\\
    \end{bmatrix}
    \label{eq:wl}
\end{equation}

Figure \ref{fig} illustrates the threshold graph $G_W$ with $N=6$ nodes coded by weight vector $W=(1,0,-\sqrt{2},0,~2)$ and its Laplacian matrix $Q_W$.
The graph $G_W$ is constructed by sequentially adding node $i$, along with its associated weight $w_i$ for $1 \leq i \leq 6$ (see Figure \ref{graph construction}).

\begin{figure}[h]
    \centering
    \includegraphics[width=\linewidth]{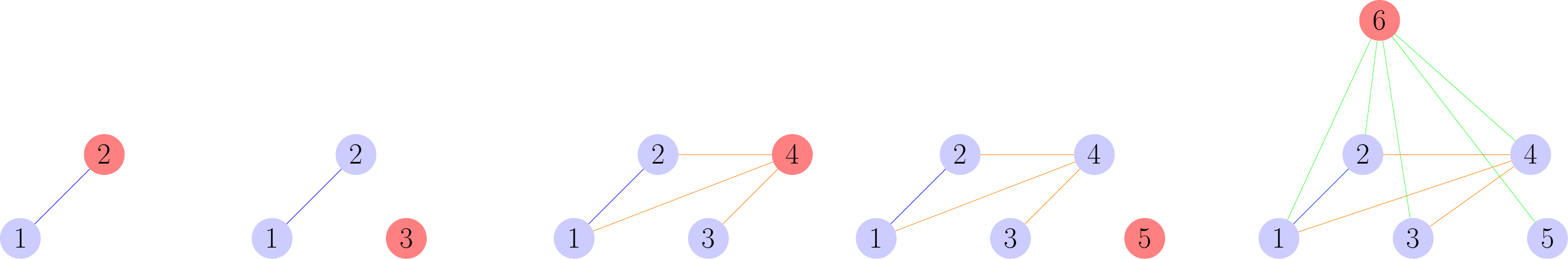}
    \caption{Construction of $G_W$ coded by weight vector $W=(1,0,-\sqrt{2},0,~2)$. At each step, a node $i$ (highlighted in red) is added with links of weight $w_i$, which connect it to all previous nodes $j <i $. Line colors represent link weights. Blue, orange and green lines have weights 1, $-\sqrt{2}$ and 2, respectively.}
    \label{graph construction}
\end{figure}

\begin{figure}[h]
    \subfigure{
    \includegraphics[height=100pt]{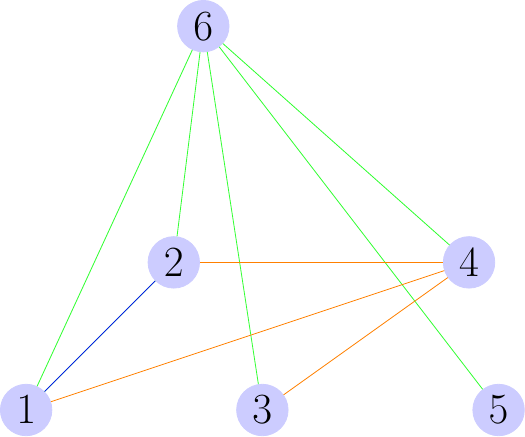} }
    ~\\[-105pt]
    \subfigure{~\hspace{145pt}
    $   Q_W = \left[
        \begin{array}{cccccc}
        3-\sqrt{2} & -1 & 0 & \sqrt{2} & 0 & -2 \\
        -1 & 3-\sqrt{2} & 0 & \sqrt{2} & 0 & -2 \\
        0 & 0 & 2-\sqrt{2} & \sqrt{2} & 0 & -2 \\
        \sqrt{2} & \sqrt{2} & \sqrt{2} & 2-3\sqrt{2} & 0 & -2 \\
        0 & 0 & 0 & 0 & 2 & -2 \\
        -2 & -2 & -2 & -2 & -2 & ~10
        \end{array}\right]
   $}\caption{The threshold graph $G_W$ with weight vector $W=(1,0,-\sqrt{2},0,~2)$ and its Laplacian matrix $Q_W$.}
    \label{fig}
\end{figure}

We define ${\cal A}_N$ to be the set of Laplacian matrices of weighted threshold graphs of order $N$. 
The main purpose of this paper is to investigate the structure of ${\cal A}_N$ and the eigensystem of the matrices in ${\cal A}_N$.
In Section~\ref{sec:an}, we establish that ${\cal A}_N$  constitutes a commutative algebra of dimension $N-1$ and provide a basis. 
In Section~\ref{sec:eigen}, we find eigenvectors and eigenvalues for the matrices in ${\cal A}_N$.
In particular we prove that the spectrum of $Q_W\in{\cal A}_N$ can be written as a linear transformation of its weight vector $W$.
In section~\ref{sec:cos}, we show that, in case of at most three values for the weights, two weighted threshold graphs are isomorphic if their Laplacian matrices are cospectral, i.e., they have the same eigenvalues.

\section{Structure of ${\cal A}_N$} \label{sec:an}
\subsection{A basis}
Considering the threshold graph with $N$ nodes and weight vector $W_i=(0,\ldots,0,w_i=1,0,\ldots,0)$, where $2\leq i\leq N$, the corresponding Laplacian matrix $Q_i:=Q_{W_i}$ is given by
    \begin{equation}
    \label{eq:qi}
        \left( Q_{i} \right)_{mn}= 
        \begin{cases}
            -1 &1 \leq n < m=i,\text{ or } 1 \leq m< n=i, \\
            1 & 1 \leq m = n < i,\\
            i-1 & m=n=i, \\
            0  & \text{otherwise}.
        \end{cases}
    \end{equation}  
    
For any threshold graph $G_W$ of $N$ nodes coded by $W=(w_2,w_3, \ldots, w_N)$, its Laplacian matrix $Q_W$ can be represented by 
    \begin{equation}\label{eq:qt}
        Q_W = \sum_{i=2}^{N} w_iQ_i,
    \end{equation}  
which shows that ${\cal A}_N=\text{ span}\{Q_2,\ldots ,Q_N\}$.
Clearly, the matrices $Q_2,\ldots,Q_N$ are independent, 
which implies that $\{Q_2,\ldots,Q_N\}$ is a basis for ${\cal A}_N$ which is an $(N-1)$-dimensional linear subspace of ${\R}^{N\times N}$, the vector space of real $N\times N$ matrices.

\subsection{The algebra ${\cal A}_N$}
In this subsection, we establish that ${\cal A}_N$ is an $(N-1)$-dimensional commutative subalgebra of $\R^{N\times N}$.
We start with some properties of the matrices $Q_i$ defined in Eq.~(\ref{eq:qi}).
\begin{lemma}\label{lm:qic}
    \begin{equation*}\label{eq:qij}
        Q_iQ_j=Q_jQ_i = Q_i.
        \text{~~for }
        ~2\leq i < j\leq N.
    \end{equation*}
\end{lemma}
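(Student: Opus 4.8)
The plan is to prove the identity $Q_iQ_j = Q_jQ_i = Q_i$ for $2\le i<j\le N$ by direct computation of the matrix products, exploiting the highly structured, almost-block form of the matrices $Q_i$ given in Eq.~(\ref{eq:qi}). The key observation is that $Q_i$ is supported entirely on the principal $i\times i$ submatrix: outside the rows and columns indexed by $1,\ldots,i$, the matrix $Q_i$ is zero. Moreover, within that block $Q_i$ has a very simple shape --- it is the identity on the first $i-1$ diagonal positions, has value $i-1$ in the $(i,i)$ entry, and has $-1$ in the entire last row and last column of the block (positions $(m,i)$ and $(i,n)$ for $m,n<i$). I would first record this structural description explicitly, since it makes both products transparent.

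For the product $Q_jQ_i$ with $i<j$, I would compute $(Q_jQ_i)_{mn}=\sum_{k=1}^N (Q_j)_{mk}(Q_i)_{kn}$. Because $Q_i$ vanishes outside rows/columns $1,\ldots,i$, the sum over $k$ collapses to $k\in\{1,\ldots,i\}$, and similarly the nonzero columns of the result are confined to $n\le i$. On that index range $Q_j$ restricted to rows $k\le i$ is especially simple: for $k<i$ we have $(Q_j)_{mk}$ equal to $1$ when $m=k$ (and $0$ for other $m<j$, apart from the column-$j$ and row-$j$ entries which are irrelevant here since we need $m\le j$ and $k<i<j$), while the column $k=i$ contributes the diagonal and off-diagonal entries of $Q_j$ that sit inside its own block. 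The claim $Q_jQ_i=Q_i$ should then follow by checking the handful of cases for $(m,n)$ --- namely $m=n<i$, $m<i=n$ is impossible to produce spuriously, the diagonal entry $m=n=i$, and the column/row patterns --- and confirming each matches the corresponding entry of $Q_i$. The arithmetic is routine but requires care in the case $m=n=i$, where one must verify that the contributions sum to $i-1$.

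For the reverse product $Q_iQ_j=Q_i$, I would run the analogous computation $(Q_iQ_j)_{mn}=\sum_k (Q_i)_{mk}(Q_j)_{kn}$; here the left factor $Q_i$ forces $m\le i$ and $k\le i$, and one checks that $Q_j$ acts as the identity on this restricted index set because all the genuinely ``$j$-specific'' entries of $Q_j$ (the $\pm(j-1)$ diagonal entry and the $-1$ row/column $j$) lie outside the range $k,n\le i<j$. Thus $Q_j$ looks like the identity matrix as seen from the support of $Q_i$, giving $Q_iQ_j=Q_i$ immediately, and symmetrically this also re-derives $Q_jQ_i=Q_i$.

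I expect the main obstacle to be purely bookkeeping rather than conceptual: keeping the index ranges and the several sub-cases of Eq.~(\ref{eq:qi}) straight, and in particular verifying the $(i,i)$ diagonal entry correctly in the product $Q_jQ_i$, where several terms must combine to yield exactly $i-1$. A cleaner alternative that avoids the casework is to identify each $Q_i$ with a rank-characterizable operator: one can write $Q_i = P_i - \tfrac{1}{i}\,u_i u_i^{\top}$-type decompositions, or more directly observe that $Q_i$ acts as an orthogonal-projection-like map onto the first $i-1$ coordinates relative to the $i$-th. If such a clean operator description is available it would let me replace the entry-wise verification with a one-line algebraic identity; I would attempt that reformulation first and fall back on the direct entrywise computation only if the operator identity proves awkward to state.
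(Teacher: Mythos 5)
Your overall plan (direct entrywise computation, exploiting the fact that $Q_i$ is supported on the leading $i\times i$ principal block) can be made to work, but the step you rely on to finish is not correct as stated. You claim that $Q_j$ ``acts as the identity'' on the index range relevant to $Q_i$ because all the $j$-specific entries of $Q_j$ lie outside the range $k,n\le i<j$. That is true of the principal $i\times i$ submatrix of $Q_j$, but the product $(Q_iQ_j)_{mn}$ must be evaluated for \emph{all} $n$, and for $n=j$ the sum $\sum_{k\le i}(Q_i)_{mk}(Q_j)_{kj}$ picks up the entries $(Q_j)_{kj}=-1$ for $k\le i$, which sit squarely inside the range of $k$ over which you sum. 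These contributions do not vanish term by term; they vanish only because every row of $Q_i$ sums to zero, so that $\sum_{k\le i}(Q_i)_{mk}\cdot(-1)=0$. The symmetric issue occurs in row $m=j$ of $Q_jQ_i$, where $(Q_j)_{jk}=-1$ for all $k\le i$ and you need the zero \emph{column} sums of $Q_i$. Your case list never mentions the row/column indexed by $j$, yet that is exactly where the products could fail to equal $Q_i$ (whose row and column $j$ are zero); without invoking the zero row/column sums of $Q_i$ the argument is incomplete. Separately, the ``cleaner alternative'' you float at the end ($Q_i=P_i-\tfrac1i u_iu_i^{\top}$-type identities) is only a guess at a decomposition and is not verified, so it cannot carry the proof. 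Incidentally, the sub-case you flag as delicate, the $(i,i)$ entry of $Q_jQ_i$, is actually immediate: $(Q_j)_{ik}=\delta_{ik}$ for $k\le i<j$, so that entry is just $(Q_i)_{ii}=i-1$.

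Once you add the zero-row-sum observation, your computation closes and gives a legitimate, if laborious, proof. For comparison, the paper packages exactly these two structural facts (support confined to the leading $i\times i$ block, and zero row/column sums) into a one-line argument: every row of $Q_i$ is orthogonal to every column of $Q_j-I_N$, since the columns of $Q_j-I_N$ with index $n\ne j$ are supported outside $\{1,\dots,i\}$ while the column $n=j$ is constant on $\{1,\dots,i\}$ and hence orthogonal to the zero-sum rows of $Q_i$. This yields $Q_i(Q_j-I_N)=(Q_j-I_N)Q_i=O$ at once and eliminates the bookkeeping you were (rightly) worried about.
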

\begin{proof}
When $i<j$, every row (or column) of $Q_i$ is orthogonal with every column (or row) of $Q_j-I_N$.
Therefore $Q_i (Q_j - I_N) = (Q_j - I_N)Q_i = O$, and hence $Q_iQ_j=Q_jQ_i = Q_i$.
\end{proof}

\begin{lemma}\label{lm:qip}
\begin{equation}\label{eq:qip}       
         Q_i^p=i^{p-1}Q_i-\frac{i^{p-1}-1}{i-1}\sum_{j=2}^{i-1}Q_j 
         \text{~~for }
         1 \leq p,~2\leq i\leq N.
\end{equation}
\end{lemma}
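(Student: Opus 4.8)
The plan is to prove Eq.~(\ref{eq:qip}) by induction on $p$, with essentially all of the substantive content concentrated in the case $p=2$; the remaining cases are algebraic bookkeeping built on Lemma~\ref{lm:qic}.

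The base case $p=1$ is immediate: the prefactor is $i^{p-1}=1$ and the coefficient $\frac{i^{p-1}-1}{i-1}=0$, so the right-hand side collapses to $Q_i$ (with the convention that the empty sum for $i=2$ vanishes). The key step is then to establish the identity $Q_i^2=iQ_i-\sum_{j=2}^{i-1}Q_j$. I would prove this by exploiting the fact that $Q_i$ is the Laplacian of a star with center $i$ and leaves $1,\dots,i-1$, which admits the edge decomposition $Q_i=\sum_{k=1}^{i-1}(e_k-e_i)(e_k-e_i)^{T}$, where $e_k$ denotes the $k$-th standard basis vector of $\R^{N}$. Since $(e_k-e_i)^{T}(e_l-e_i)=1+\delta_{kl}$ for $k,l<i$, squaring the decomposition gives $Q_i^2=Q_i+v_iv_i^{T}$ with $v_i=\sum_{k=1}^{i-1}e_k-(i-1)e_i$. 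A short entrywise comparison then shows $v_iv_i^{T}=(i-1)Q_i-\sum_{j=2}^{i-1}Q_j$, where one uses that $\sum_{j=2}^{i-1}Q_j$ is the Laplacian of the clique on $\{1,\dots,i-1\}$, equal to $(i-1)I_N-J$ restricted to that block and zero elsewhere. Combining the two displays yields the claimed formula for $Q_i^2$. Alternatively, the identity can be checked by a direct (if tedious) entrywise multiplication straight from Eq.~(\ref{eq:qi}). This $Q_i^2$ computation is the main obstacle; once it is in hand, nothing else is delicate.

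For the inductive step, assume Eq.~(\ref{eq:qip}) holds for some $p\ge 2$ and write $Q_i^{p+1}=Q_i^{p}Q_i$. Substituting the inductive hypothesis and applying Lemma~\ref{lm:qic} in the form $Q_jQ_i=Q_j$ for $j<i$, the sum term passes through unchanged while the leading term becomes $i^{p-1}Q_i^2$. Inserting the $p=2$ identity then gives $Q_i^{p+1}=i^{p}Q_i-\bigl(i^{p-1}+\tfrac{i^{p-1}-1}{i-1}\bigr)\sum_{j=2}^{i-1}Q_j$. Finally, simplifying the coefficient via $i^{p-1}+\tfrac{i^{p-1}-1}{i-1}=\tfrac{i^{p}-1}{i-1}$ reproduces Eq.~(\ref{eq:qip}) with $p$ replaced by $p+1$, closing the induction. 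It is worth noting as a sanity check that for $i=2$ the empty sum makes the formula reduce to $Q_2^{p}=2^{p-1}Q_2$, consistent with $Q_2$ being the Laplacian of $K_2$.
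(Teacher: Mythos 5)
Your proof is correct, and the overall scaffolding (induction on $p$, with the identity $Q_i^2=iQ_i-\sum_{j=2}^{i-1}Q_j$ as the base of the recursion and Lemma~\ref{lm:qic} driving the inductive step) matches the paper exactly; the inductive step, including the coefficient simplification $i^{p-1}+\tfrac{i^{p-1}-1}{i-1}=\tfrac{i^{p}-1}{i-1}$, is the same computation. Where you genuinely diverge is in how the $p=2$ identity is obtained. The paper does it by brute force: it writes out $(Q_i^2)_{mn}$ case by case from Eq.~(\ref{eq:qi}) and checks that the same table of values is produced by $iQ_i-\sum_{j=2}^{i-1}Q_j$. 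You instead use the rank-one edge decomposition $Q_i=\sum_{k=1}^{i-1}(e_k-e_i)(e_k-e_i)^{T}$ of the star Laplacian, which gives $Q_i^2=Q_i+v_iv_i^{T}$ in one line and reduces the work to identifying $v_iv_i^{T}=(i-1)Q_i-\sum_{j=2}^{i-1}Q_j$; I checked this entrywise and it holds (both sides are $1$ on the $\{1,\dots,i-1\}$ block, $-(i-1)$ in row and column $i$ off the diagonal, $(i-1)^2$ at position $(i,i)$, and $0$ elsewhere). Your route is more conceptual and buys an extra dividend the paper does not exploit here: the vector $v_i$ in your decomposition is precisely the common eigenvector $v_i$ that later appears in Lemma~\ref{lm:1}, so the identity $Q_i^2=Q_i+v_iv_i^{T}$ foreshadows the eigenvalue $i$ of $Q_i$ on $v_i$ (since $v_i^Tv_i = i(i-1)$ is consistent with $Q_i v_i = i v_i$). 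The paper's entrywise check is more elementary and self-contained but offers no such insight. Both are complete proofs.
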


\begin{proof}
If $p=1$, then Eq. (\ref{eq:qip}) is obvious.
If $p=2$, we have
\begin{equation*}
        \left( Q_i^2 \right)_{mn}= \sum_{k=1}^{N} \left( Q_i \right)_{mk}\left( Q_i \right)_{kn}=
        \begin{cases}
            1 
            &\text{$1 \leq m \neq n < i$,}\\
            2 
            &\text{$1 \leq m=n <i$,}\\
            i(i-1) 
            &\text{$m=n=i$,}\\
            -i 
            &  \text{$1 \leq n < m=i$ or $ 1 \leq m< n =i$,}\\
            0  & \text{otherwise.}
        \end{cases}
\end{equation*}

For $(iQ_i-\sum_{j=2}^{i-1}Q_j)_{mn}$ we find the same values.
Therefore, $Q_i^2 = iQ_i-\sum_{j=2}^{i-1}Q_j$.\\
Next, suppose that Eq. (\ref{eq:qip}) holds. Using Lemma~~\ref{lm:qic}, we obtain 
\begin{equation*}
    \begin{aligned}
            Q_i^{p+1}
            &= Q_i^{p}Q_i=\left( i^{p-1}Q_i-\frac{i^{p-1}-1}{i-1}\sum_{j=2}^{i-1}Q_j \right)Q_i 
            =i^{p-1}Q_i^2-\frac{i^{p-1}-1}{i-1}\sum_{j=2}^{i-1}Q_j \\
            &=i^{p-1} \left( iQ_i-\sum_{j=2}^{i-1}Q_j\right)-\frac{i^{p-1}-1}{i-1}\sum_{j=2}^{i-1}Q_j 
            =i^{p} Q_i -\frac{i^{p}-1}{i-1}\sum_{j=2}^{i-1}Q_j.
    \end{aligned}
\end{equation*}
Thus, Eq.~(\ref{eq:qip}) holds for $p\geq 3$ by induction.
\end{proof}

Recall that $Q_W\in{\cal A}_N$ with weight vector $W=(w_2,\ldots,w_N)$ can be written as
\begin{equation}\label{eq:span}
Q_W=\sum_{i=2}^N w_iQ_i.
\end{equation}

Lemma~\ref{lm:qic} shows that any two matrices $Q_i$ and $Q_j$ commute, i.e., $Q_iQ_j=Q_jQ_i$.
Moreover, Lemma~\ref{lm:qic} and \ref{lm:qip} show that $Q_iQ_j\in{\cal A}_N$ for $2\leq i,j\leq N$.
Using Eq.~(\ref{eq:span}) it follows that any two matrices in ${\cal A}_N$ commute, and that ${\cal A}_N$ is closed under multiplication.
Thus, we have established that 

\begin{theorem}\label{th3}
${\cal A}_N$ is an $(N-1)$-dimensional commutative subalgebra of $\R^{N\times N}$.
\end{theorem}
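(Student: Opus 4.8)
The plan is to verify the three defining properties in turn: that ${\cal A}_N$ is a linear subspace of dimension $N-1$, that it is closed under matrix multiplication, and that this multiplication is commutative. The first property is already in hand, since the preceding subsection exhibits $\{Q_2,\ldots,Q_N\}$ as a basis, so ${\cal A}_N$ is an $(N-1)$-dimensional subspace of $\R^{N\times N}$. It therefore only remains to address closure and commutativity.

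The key observation I would rely on is that matrix multiplication is bilinear, so both closure and commutativity need only be checked on the basis elements $Q_i$. Writing two arbitrary elements as $Q_W=\sum_{i=2}^N w_iQ_i$ and $Q_{W'}=\sum_{j=2}^N w'_jQ_j$, one obtains $Q_WQ_{W'}=\sum_{i,j}w_iw'_jQ_iQ_j$, so the whole statement reduces to controlling the $(N-1)\times(N-1)$ array of products $Q_iQ_j$.

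For the off-diagonal products ($i\neq j$), Lemma~\ref{lm:qic} gives $Q_iQ_j=Q_jQ_i=Q_{\min(i,j)}$, which is simultaneously a proof of commutativity and of membership in ${\cal A}_N$ for these pairs. For the diagonal products ($i=j$), commutativity is automatic, and Lemma~\ref{lm:qip} with $p=2$ yields $Q_i^2=iQ_i-\sum_{k=2}^{i-1}Q_k$, again a linear combination of basis matrices and hence an element of ${\cal A}_N$. Substituting these identities into the double sum shows both $Q_WQ_{W'}=Q_{W'}Q_W$ and $Q_WQ_{W'}\in{\cal A}_N$, which completes the argument.

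I do not expect a genuine obstacle at the level of the theorem itself: once Lemmas~\ref{lm:qic} and~\ref{lm:qip} are available, everything follows from bilinearity. The real work sits inside those lemmas, in particular the entrywise computation of $Q_i^2$ in Lemma~\ref{lm:qip}, which is precisely what guarantees that squaring a basis element does not leave the span. The only point requiring a moment's care is to note that ${\cal A}_N$ need not contain the identity (every Laplacian has zero row sums, so $I_N\notin{\cal A}_N$), so we are proving closure for a non-unital algebra, which is consistent with the statement as phrased.
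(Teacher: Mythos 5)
Your proposal is correct and follows essentially the same route as the paper: both establish the dimension from the basis $\{Q_2,\ldots,Q_N\}$ and then reduce closure and commutativity to the products $Q_iQ_j$ via bilinearity, handling the off-diagonal cases with Lemma~\ref{lm:qic} and the squares with Lemma~\ref{lm:qip}. Your closing remark that the algebra is non-unital (since $I_N\notin{\cal A}_N$) is a correct observation not made explicit in the paper, but it does not alter the argument.
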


Take $Q_W, Q_{W'}\in{\cal A}_N$ 
with weight vectors $W=(w_2,\ldots,w_N)$ and $W'=(w'_2,\ldots,w'_N)$.
Then $Q_W+Q_{W'}\in{\cal A}_N$ with weight vector $W+W'$, 
and similarly $aQ_W\in{\cal A}_N$ with weight vector $aW$ for every $a\in\R$.
Also, the product $Q_W Q_{W'}\in {\cal A}_N$, but more work is required to obtain the weight vector of $Q_W Q_{W'}$ from $W$ and $W'$.
\begin{proposition}\label{prp1}
$\displaystyle{
Q_W Q_{W'} =
\sum_{i=2}^{N} \left( i w_i  w_i^{\prime} 
            - \sum_{j=i+1}^{N} \left(  w_j  w_j^{\prime} -w_i w_j^{\prime} - w_j  w_i^{\prime} \right) \right)Q_i}$.
\end{proposition}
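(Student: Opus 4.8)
The plan is to expand the product by bilinearity and then reduce every elementary product $Q_iQ_j$ to the basis using the two preceding lemmas. Writing $Q_W=\sum_{i=2}^N w_iQ_i$ and $Q_{W'}=\sum_{j=2}^N w'_jQ_j$, bilinearity gives $Q_WQ_{W'}=\sum_{i=2}^N\sum_{j=2}^N w_iw'_j\,Q_iQ_j$, so everything hinges on knowing each $Q_iQ_j$ as an element of ${\cal A}_N$.

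Next I would partition the double sum into three parts according to $i<j$, $i>j$, and $i=j$. For $i\neq j$ Lemma~\ref{lm:qic} (together with commutativity) yields $Q_iQ_j=Q_{\min(i,j)}$, while for $i=j$ the $p=2$ instance of Lemma~\ref{lm:qip} gives $Q_i^2=iQ_i-\sum_{k=2}^{i-1}Q_k$. Thus the off-diagonal terms contribute $\sum_{i<j}w_iw'_j\,Q_i+\sum_{i>j}w_iw'_j\,Q_j$, and the diagonal terms contribute $\sum_{i=2}^N w_iw'_i\bigl(iQ_i-\sum_{k=2}^{i-1}Q_k\bigr)$.

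The final step is to collect the coefficient of each basis matrix $Q_\ell$. Four contributions arise: the term $\ell\,w_\ell w'_\ell$ from $iQ_i$; the sum $w_\ell\sum_{j>\ell}w'_j$ from the block $i<j$ with $i=\ell$; the sum $w'_\ell\sum_{j>\ell}w_j$ from the block $i>j$ with $j=\ell$; and $-\sum_{j>\ell}w_jw'_j$ coming from those $Q_\ell$ that appear inside $-\sum_{k=2}^{i-1}Q_k$, namely whenever $i\ge\ell+1$. Adding these and regrouping the three sums over $j>\ell$ yields the coefficient $\ell\,w_\ell w'_\ell-\sum_{j=\ell+1}^N(w_jw'_j-w_\ell w'_j-w_jw'_\ell)$, which is exactly the claimed coefficient of $Q_\ell$ (with $i$ renamed to $\ell$).

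I expect the only real difficulty to be bookkeeping: correctly swapping the summation indices in the $i>j$ block so that the free index matches $Q_\ell$, and correctly identifying which diagonal products $w_iw'_i$ feed into the coefficient of a given $Q_\ell$ through the inner sum $\sum_{k=2}^{i-1}Q_k$. There is no conceptual obstacle beyond this reindexing; the structural input is supplied entirely by Lemmas~\ref{lm:qic} and~\ref{lm:qip}, which are precisely what make ${\cal A}_N$ closed under multiplication.
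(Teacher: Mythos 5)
Your proposal is correct and follows essentially the same route as the paper: expand by bilinearity, reduce the cross terms via Lemma~\ref{lm:qic} and the squares via the $p=2$ case of Lemma~\ref{lm:qip}, then reindex to collect the coefficient of each $Q_i$. The only cosmetic difference is that the paper merges the two off-diagonal blocks into $\sum_{i<j}(w_iw'_j+w_jw'_i)Q_i$ at the outset, whereas you keep them separate until the final collection step.
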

\begin{proof}
Using Lemma~\ref{lm:qic}, we start from Eq.~(\ref{eq:span}),
\begin{equation}\label{eq:qw2}
    Q_W Q_{W'}= \sum_{i=2}^{N} w_i  w_i^{\prime} Q_i^2 
            +\sum_{i=2}^{N-1}\sum_{j=i+1}^{N} \left( w_i  w_j^{\prime} +w_j  w_i^{\prime}\right) Q_i.
\end{equation}
We invoke Lemma~\ref{lm:qip} and obtain
\begin{equation}\label{eq:w2q2}
    \sum_{i=2}^{N} w_i  w_i^{\prime} Q_i^2 
    = \sum_{i=2}^{N} i w_i  w_i^{\prime} Q_i 
    -\sum_{i=2}^{N} \sum_{j=2}^{i-1}\ w_i  w_i^{\prime} Q_j.
\end{equation}
Reversing the $i$ and $j$ sums in~(\ref{eq:w2q2}) yields
\begin{equation} \label{eq:w2q}
    \sum_{i=2}^{N} \sum_{j=2}^{i-1}\ w_i  w_i^{\prime} Q_j=\sum_{j=2}^{N-1} \sum_{i=j+1}^{N} w_i  w_i^{\prime} Q_j=\sum_{i=2}^{N-1} \sum_{j=i+1}^{N} w_j  w_j^{\prime} Q_i.
\end{equation}
Substituting (\ref{eq:w2q}) and (\ref{eq:w2q2}) into (\ref{eq:qw2}) proves Proposition~\ref{prp1}.
\end{proof} 

\section{An eigensystem}

\label{sec:eigen}
It is known (see for example \cite{gantmacher}) that a set of mutually commuting symmetric matrices has a common basis of eigenvectors.
In this section, we will find these common eigenvectors for the matrices in ${\cal A}_N$. 
As a result, the eigenvalues of each $Q_W \in {\cal A}_N$ can be obtained by a linear transformation of its weight vector $W$.
We have the following lemma on the eigenvalues and eigenvectors for each matrix $Q_i$ defined in Eq.~(\ref{eq:qi}).
\begin{lemma}\label{lm:1}
Let $v_1$ be the all-one vector $\1$ and for $2 \leq j \leq N$, we 
    define the $N \times 1$ vector $v_j$ as
     \begin{equation}
            \left( v_{j} \right)_{k}= 
            \begin{cases}
                1 &  \text{$k < j$,}\\
                1-j &  \text{$ k=j$, }\\
                0 & \text{$k > j$.}
            \end{cases}
     \end{equation}
     Then,
      $\left\{ v_1, \cdots, v_N\right\}$ 
is an orthogonal basis of eigenvectors for each $Q_i$. 
The corresponding eigenvalues $Q_{i}v_{j} = \left(  \mu_i\right)_j v_{j}$ are
        \begin{equation}
           \left(  \mu_i\right)_j =
            \begin{cases}
                1 & \text{$2 \leq j <i$,}\\
                i & \text{$2 \leq j=i$,}\\
                0 & \text{$j>i$ or $j=1$.}\\
            \end{cases}
        \end{equation}
    and the characteristic polynomial is $P_{Q_i} \left( \lambda \right) =  \left( \lambda -1  \right)^{i-2} \left( \lambda -i  \right) \lambda^{N-i+1}$.
\end{lemma}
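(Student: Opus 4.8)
The plan is to verify everything by direct computation, exploiting the sparse structure of $Q_i$: outside the leading $i\times i$ block $Q_i$ vanishes, and inside that block the only nonzero entries are the diagonal ones $(Q_i)_{mm}=1$ for $m<i$, the corner $(Q_i)_{ii}=i-1$, and the entries $(Q_i)_{mi}=(Q_i)_{im}=-1$ for $m<i$. In other words $Q_i$ is the Laplacian of a star on the nodes $1,\dots,i$ with center $i$, together with $N-i$ isolated nodes; this picture already predicts the eigenvalues $0$, $1$ and $i$ and serves as a sanity check for the computation below.

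First I would settle the basis claim. A short calculation gives $\sum_k (v_j)_k=(j-1)+(1-j)=0$ for $j\ge 2$, so each $v_j$ with $j\ge2$ is orthogonal to $v_1=\1$. For $2\le j<k$ the inner product $v_j^\top v_k$ collects $(j-1)$ from the coordinates $k'<j$ and $(1-j)$ from the coordinate $k'=j$ (all later coordinates of $v_j$ vanish), giving $0$. Thus $\{v_1,\dots,v_N\}$ is a set of $N$ pairwise orthogonal, hence linearly independent, vectors in $\R^N$, so it is an orthogonal basis.

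Next I would check $Q_iv_j=(\mu_i)_jv_j$ by splitting into three cases. The case $j=1$ is immediate since $Q_i\1=0$ (zero row sums). For $j>i$, every coordinate of $v_j$ inside the support of $Q_i$ equals $1$, so $(Q_iv_j)_m=\sum_{n=1}^i(Q_i)_{mn}=\sum_{n=1}^N(Q_i)_{mn}=0$, again by the zero-row-sum property; hence $(\mu_i)_j=0$. For $2\le j<i$, the two identities that do the work are $(v_j)_i=0$ and $\sum_{n=1}^{i-1}(v_j)_n=(j-1)+(1-j)=0$: the first makes rows $m<i$ act as the identity on $v_j$, while the second kills row $i$ and shows $(Q_iv_j)_i=0=(v_j)_i$; rows $m>i$ are zero, so $Q_iv_j=v_j$ and $(\mu_i)_j=1$. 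Finally, for $j=i$ one computes directly that rows $m<i$ give $(Q_iv_i)_m=1-(1-i)=i=i(v_i)_m$, row $i$ gives $-(i-1)+(i-1)(1-i)=-i(i-1)=i(v_i)_i$, and rows $m>i$ vanish, so $(\mu_i)_i=i$.

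The characteristic polynomial then follows by bookkeeping the multiplicities: eigenvalue $1$ occurs for $j=2,\dots,i-1$ (multiplicity $i-2$), eigenvalue $i$ for $j=i$ (multiplicity $1$), and eigenvalue $0$ for $j=1$ and $j=i+1,\dots,N$ (multiplicity $N-i+1$), which sum to $N$ and give $P_{Q_i}(\lambda)=(\lambda-1)^{i-2}(\lambda-i)\lambda^{N-i+1}$. The computation has no conceptual obstacle; the only thing to be careful about is the case split at the boundary rows $m<i$, $m=i$, $m>i$ matched against the boundary $j<i$, $j=i$, $j>i$, where the two cancellation identities $(v_j)_i=0$ and $\sum_{n<i}(v_j)_n=0$ must be invoked in the right place.
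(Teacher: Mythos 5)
Your proof is correct and follows essentially the same route as the paper: a direct case-by-case computation of $Q_iv_j$ split according to $j<i$, $j=i$, $j>i$ (plus $j=1$), together with a multiplicity count for the characteristic polynomial. The only additions are the explicit verification of pairwise orthogonality and the star-Laplacian interpretation of $Q_i$, both of which are correct and, if anything, slightly more careful than the paper's own write-up.
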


\begin{proof}
The vector $Q_i v_1$ equals the all-zero vector.
Any two distinct vectors from $\left\{v_1, \cdots, v_N\right\}$ are orthogonal, i.e., $v_{l}^{T}v_m = \delta_{lm}$ for $1 \leq i,m \leq N$.
Next, we consider three cases depending on the relationship between $i$ and $j$ for each matrix $Q_i$.\\
\textbf{Case 1}: $2 \leq j <i$. 
  Each component $\left(Q_iv_j \right)_m$ of the vector $Q_iv_j$ equals
\begin{equation*} 
  \begin{aligned}
    \left(Q_iv_j \right)_m&= \sum_{k=1}^N(Q_i)_{mk}(v_j)_k= \sum_{k=1}^j(Q_i)_{mk}(v_j)_k\\
    &= \begin{cases}
        (Q_i)_{mm}(v_j)_m = (v_j)_m   & \text{$m \leq j$,}\\
        \sum_{k=1}^j(Q_i)_{ik}(v_j)_k=-\sum_{k=1}^N (v_j)_k = 0& \text{$m=i$,}\\
        0& \text{otherwise.}
      \end{cases}
  \end{aligned} 
 \end{equation*}
Thus, we deduce $Q_iv_j = v_j$ for each vector $v_j$ with $2 \leq j <i$. \\
\textbf{Case 2}. $2 \leq j=i$. Each component $\left(Q_iv_i \right)_m$ of the vector $Q_iv_i$ equals
\begin{equation*}
  \begin{aligned}
     \left(Q_iv_i \right)_m&= \sum_{k=1}^i(Q_i)_{mk}(v_i)_k \\
     &=
      \begin{cases}
        (Q_i)_{mm}(v_i)_m +  (Q_i)_{mi}(v_i)_i = i = i(v_i)_m   & \text{$m < i$,}\\
        \sum_{k=1}^{i-1}(Q_i)_{ik}(v_i)_k+(Q_i)_{ii}(v_i)_i= i(1-i)=i(v_i)_i& \text{$m=i$,}\\
        0& \text{$m > i$.}
      \end{cases}
  \end{aligned}
\end{equation*}
which indicates  $Q_i v_i = i v_i$ for the vector $v_i$.\\
\textbf{Case 3}: $j>i$. Each component $\left(Q_iv_j \right)_m$ of the vector $Q_iv_j$ can be written as $\left(Q_iv_j \right)_m= \sum_{k=1}^N(Q_i)_{mk}(v_j)_k= \sum_{k=1}^i(Q_i)_{mk}(v_j)_k=\sum_{k=1}^i(Q_i)_{mk}=\sum_{k=1}^N(Q_i)=0$ for $1 \leq m \leq N$. 
    Consequently, we have $Q_i v_j = 0v_j$ for each vector $v_j$ with $ j >i$. 
\end{proof}

Now we compute the eigenvalues and eigenvectors of $Q_W \in {\cal A}_N$.

\begin{theorem}\label{the:th}
    Let $G_{W}$ be a weighted threshold graph on $N$ nodes with weight vector 
    $W=(w_2, \ldots , w_N)$.
    The spectrum of the Laplacian $Q_W$ of $G_W$ is $\left\{ 0, \mu_2, \ldots, \mu_N \right\}$ where
    \begin{equation}\label{eq:th}
         \mu_i=iw_i + \sum_{j=i+1}^{N} w_j.
    \end{equation}
    This can be written in a matrix form as 
    \begin{equation}\label{eq:th'}
        \mu^T=(\mu_2,\ldots,\mu_N)^T = UW^T,
    \end{equation}
    where $U$ is an $(N-1) \times (N-1)$ upper triangular matrix defined by
    \begin{equation} \label{eq:th''}
        U_{ij}=
            \begin{cases}
                1 &  \text{$ 1 \leq i < j \leq N-1$, }\\
                i+1 & \text{$1 \leq i=j \leq N-1$, }\\
                0 & \text{$otherwise$}.
            \end{cases}
    \end{equation}        
\end{theorem}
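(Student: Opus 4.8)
The plan is to exploit the common eigenbasis furnished by Lemma~\ref{lm:1} together with the expansion $Q_W=\sum_{i=2}^N w_iQ_i$ from Eq.~(\ref{eq:span}). Since every $Q_i$ shares the same orthogonal eigenbasis $\{v_1,\ldots,v_N\}$, each $v_j$ is automatically an eigenvector of the linear combination $Q_W$, and its eigenvalue is the corresponding linear combination of the individual eigenvalues $(\mu_i)_j$. Concretely, I would write
\[
Q_W v_j=\sum_{i=2}^N w_i Q_i v_j=\Bigl(\sum_{i=2}^N w_i(\mu_i)_j\Bigr)v_j,
\]
so that the spectrum of $Q_W$ consists of the $N$ numbers $\lambda_j:=\sum_{i=2}^N w_i(\mu_i)_j$ for $1\le j\le N$, with eigenvectors $v_j$.

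Next I would evaluate each $\lambda_j$ using the three-case description of $(\mu_i)_j$ in Lemma~\ref{lm:1}. For $j=1$ every $(\mu_i)_1=0$, giving the expected zero eigenvalue. For $2\le j\le N$, the terms with $i<j$ vanish (there $(\mu_i)_j=0$), the single term $i=j$ contributes $j w_j$ (there $(\mu_i)_j=i=j$), and each term $i>j$ contributes $w_i$ (there $(\mu_i)_j=1$). Summing yields
\[
\lambda_j=j w_j+\sum_{i=j+1}^N w_i=\mu_j,
\]
which is exactly Eq.~(\ref{eq:th}). This establishes the claimed spectrum $\{0,\mu_2,\ldots,\mu_N\}$.

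For the matrix form (\ref{eq:th'})--(\ref{eq:th''}), I would simply read off the coefficients of the $w_i$ in the formula $\mu_i=iw_i+\sum_{j=i+1}^N w_j$, keeping careful track of the index shift between the weight vector $(w_2,\ldots,w_N)$ and the $(N-1)\times(N-1)$ matrix $U$, whose rows and columns are indexed by $1,\ldots,N-1$. Writing the $k$-th entries of $W^T$ and $\mu^T$ as $w_{k+1}$ and $\mu_{k+1}$, the identity $\mu_{k+1}=(k+1)w_{k+1}+\sum_{j=k+2}^N w_j$ forces the diagonal entry $U_{kk}=k+1$, the super-diagonal entries $U_{kl}=1$ for $l>k$, and $U_{kl}=0$ for $l<k$, which matches the definition in (\ref{eq:th''}).

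I do not expect a genuine obstacle here, since Lemma~\ref{lm:1} already diagonalizes every basis matrix $Q_i$ simultaneously, so the theorem reduces to a bookkeeping of finite sums. The only point requiring a little care is the index offset in the matrix reformulation, where the diagonal entry equals $k+1$ rather than $k$, reflecting that the $k$-th coordinate of the weight vector is $w_{k+1}$.
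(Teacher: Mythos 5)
Your proposal is correct and follows essentially the same route as the paper: apply $Q_W=\sum_{i=2}^N w_iQ_i$ to the common eigenvectors $v_j$ from Lemma~\ref{lm:1} and sum the individual eigenvalues case by case, which yields $\mu_j=jw_j+\sum_{i=j+1}^N w_i$. Your extra remarks on the index offset in the matrix form only spell out what the paper dismisses as "straightforward," so there is nothing to correct.
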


\begin{proof}
Let $v_j$ be a common eigenvector for 
$Q_2,\ldots,Q_N$ defined in Lemma \ref{lm:1}. 
Then 
\[
Q_W v_j =  \left( \sum_{i=2}^{N} w_iQ_i \right) v_j = j w_j v_j + \sum_{i=j+1} ^{N}w_i v_j =\left( jw_j + \sum_{k=j+1}^{N} w_k \right) v_j,
\]
for $j=2,\ldots,N$. 
This proves Eq.~\ref{eq:th}, and Eq.~\ref{eq:th'} follows straightforwardly.
\end{proof}

For the graph given in Figure~\ref{fig} with weight vector $W=(1,0,-\sqrt{2},0,~2)$ Eq.~\ref{eq:th'} gives
\begin{equation}
\mu^T = UW^T =
    \left[
        \begin{array}{rrrrr}
        2 & 1 & 1 & 1 & 1  \\
        0 & 3 & 1 & 1 & 1  \\
        0 & 0 & 4 & 1 & 1  \\
        0 & 0 & 0 & 5 & 1 \\
        0 & 0 & 0 & 0 & 6
        \end{array}\right]
        \left[
        \begin{array}{c}
        1   \\
        0   \\
        -\sqrt{2}   \\
        0  \\
        2 
        \end{array}\right]
        =
        \left[
        \begin{array}{c}
        4-\sqrt{2}   \\
        2-\sqrt{2}  \\
        2-4\sqrt{2}   \\
        2  \\
        12 
        \end{array}\right].
\end{equation}
Thus the spectrum of $Q_W$ is equal to $\{0,\ 4-\sqrt{2},\ 2-\sqrt{2},\ 2-4\sqrt{2},\ 2,\   12\}$.
\\

Independently Anđelić and Stanic \cite{andelic2025laplacian} also found a closed formula of the Laplacian eigenvalues of weighted threshold graphs. 
Their proof uses weighted Ferrers diagrams and is different from ours.

Theorem~\ref{the:th} indicates that weighted threshold graphs with integral weights have integral spectrum.
Since $U$ is an upper triangular matrix with nonzero diagonal, $U$ is invertable and it is straightforward to compute its inverse $U^{-1}$,
\begin{equation}
U^{-1}=
\begin{bmatrix}
    \frac{1}{2} & -\frac{1}{6} & -\frac{1}{12}  & -\frac{1}{20} &\cdots & -\frac{1}{(N-1)N}   \\
      & \frac{1}{3} & -\frac{1}{12} & -\frac{1}{20}   & \cdots & -\frac{1}{(N-1)N}  \\
      &        & \frac{1}{4}  & -\frac{1}{20} & \cdots & -\frac{1}{(N-1)N} \\
      &        &    & \frac{1}{5}  & \cdots    &-\frac{1}{(N-1)N} \\
      &        &\text{\huge0}    &     & \ddots & \vdots     \\
      &        &    &     &        & \frac{1}{N}  \\     
\end{bmatrix},
\end{equation}
and $(w_2,\ldots,w_N)^T = U^{-1}(\mu_2,\ldots,\mu_N)^T$.
As a result, we have the following corollary.

\begin{corollary}\label{th:lse}
    For any given real vector $\mu=(0,\mu_2,\ldots,\mu_N)$ there exists a weighted threshold graph of order $N$, whose Laplacian spectrum is 
    $\{0,\mu_2,\ldots,\mu_N\}$.
\end{corollary}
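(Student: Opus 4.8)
The plan is to read this corollary as an immediate consequence of the invertibility of the linear map $W \mapsto \mu$ established in Theorem~\ref{the:th}. By Eq.~\eqref{eq:th'}, the nonzero part of the Laplacian spectrum of $G_W$ is $\mu^T = UW^T$, where $U$ is the upper triangular matrix of Eq.~\eqref{eq:th''}. Since $U$ has diagonal entries $2, 3, \ldots, N$, all nonzero, it is invertible over $\R$, and the excerpt has already exhibited $U^{-1}$ explicitly. The whole content of the corollary is therefore the surjectivity of this map.

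First I would fix an arbitrary real vector $(\mu_2, \ldots, \mu_N)$ and define the candidate weight vector by inverting the transformation, namely
\begin{equation*}
    (w_2, \ldots, w_N)^T = U^{-1}(\mu_2, \ldots, \mu_N)^T.
\end{equation*}
This produces a genuine real weight vector $W = (w_2, \ldots, w_N)$, since $U^{-1}$ has real entries and the input is real. Next I would form the weighted threshold graph $G_W$ with this weight vector; by construction $G_W$ is a well-defined weighted threshold graph of order $N$, as every real weight vector codes such a graph. Applying Theorem~\ref{the:th} to $G_W$ then gives that the nonzero part of its Laplacian spectrum equals $U W^T = U U^{-1} (\mu_2, \ldots, \mu_N)^T = (\mu_2, \ldots, \mu_N)^T$. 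Together with the forced zero eigenvalue of any Laplacian (noted in the Introduction, since the row sums of $Q_W$ vanish), the full spectrum is $\{0, \mu_2, \ldots, \mu_N\}$, as required.

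I do not anticipate a genuine obstacle here: the argument is a one-line reversal of the established linear bijection, and the only thing to be careful about is bookkeeping. In particular, Theorem~\ref{the:th} guarantees that $0$ is always present as the eigenvalue associated with the all-one eigenvector $v_1$, so the prescribed values $\mu_2, \ldots, \mu_N$ should be understood as the \emph{remaining} $N-1$ eigenvalues and the leading $0$ in $\mu = (0, \mu_2, \ldots, \mu_N)$ is not a free parameter but is automatically supplied. The mild subtlety worth flagging is therefore purely interpretive rather than mathematical: the corollary asserts realizability of any choice of the $N-1$ free eigenvalues, with the $N$-th eigenvalue pinned at $0$ by the Laplacian structure, and the map $U^{-1}$ recovers the unique weight vector achieving it.
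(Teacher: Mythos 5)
Your proof is correct and follows exactly the paper's route: the paper derives this corollary directly from the invertibility of the upper triangular matrix $U$, setting $(w_2,\ldots,w_N)^T = U^{-1}(\mu_2,\ldots,\mu_N)^T$ and appealing to Theorem~\ref{the:th}, just as you do. Your remarks on the forced zero eigenvalue and the interpretation of the leading $0$ are accurate bookkeeping that the paper leaves implicit.
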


A different ordering of $\mu_2,\ldots,\mu_N$ gives a different weight vector, and therefore, a different weighted threshold graph with the same Laplacian spectrum.

The degree expression in Eq.~(\ref{eq:wd}) yields the following corollary.

\begin{corollary}\label{th:dl}
    Let $G_W$ be a weighted threshold graph of order $N$ with a weight vector $W=(w_2,\ldots,w_N)$.
    The Laplacian spectrum of $G_W$ is given by $\left\{ \mu_{1}=0,\mu_2,\ldots,\mu_{N} \right\}$ where for $2 \leq i \leq N$
    \begin{equation*}
        \mu_i=d_i + w_i,
    \end{equation*}
where $d_i$ denotes the weighted degree for the node $i$.
\end{corollary}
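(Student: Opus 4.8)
The plan is to obtain this corollary directly by comparing two formulas already established in the excerpt: the closed-form expression for the eigenvalues given in Theorem~\ref{the:th} and the closed-form expression for the weighted degrees given in Eq.~(\ref{eq:wd}). No new machinery is required, because both quantities are written in terms of the same weight vector $W=(w_2,\ldots,w_N)$, so the corollary will follow from a single algebraic identity relating them.

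First I would recall that Theorem~\ref{the:th} yields, for $2\leq i\leq N$,
\[
\mu_i = i w_i + \sum_{j=i+1}^{N} w_j,
\]
while Eq.~(\ref{eq:wd}) gives the weighted degree of node $i$ as
\[
d_i = (i-1) w_i + \sum_{j=i+1}^{N} w_j .
\]
Then I would subtract the two expressions. The tail sums $\sum_{j=i+1}^{N} w_j$ are identical and cancel, leaving
\[
\mu_i - d_i = i w_i - (i-1) w_i = w_i,
\]
which rearranges to $\mu_i = d_i + w_i$, exactly as claimed. The remaining eigenvalue $\mu_1 = 0$ is the zero eigenvalue of the Laplacian guaranteed by the zero row sum noted in the introduction.

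The main obstacle here is that there is essentially none: the statement is an immediate consequence of the two prior formulas, and the proof reduces to the one-line cancellation above. The only point requiring a moment's care is the index bookkeeping, namely ensuring that the degree formula of Eq.~(\ref{eq:wd}) and the eigenvalue formula of Theorem~\ref{the:th} are aligned on the same index $i$ and carry the identical tail sum before the subtraction, so that the cancellation is legitimate for every $i$ with $2\leq i\leq N$.
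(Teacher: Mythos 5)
Your proposal is correct and matches the paper's (implicit) argument exactly: the paper derives this corollary by combining the eigenvalue formula of Theorem~\ref{the:th} with the degree expression in Eq.~(\ref{eq:wd}), which is precisely your one-line cancellation $\mu_i - d_i = iw_i - (i-1)w_i = w_i$.
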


\section{Cospectrality}
\label{sec:cos}
Two matrices are called cospectral if they have the same eigenvalues.
In the previous section, we observed that different weighted threshold graphs can have cospectral Laplacian matrices.
We will show that the observation is not the case if the number of distinct weights is limited to three.
We start with an observation about the Laplacian matrix of arbitrary weighted graphs (see \cite{van2023graph}, \textbf{Art.} 125).

\begin{proposition}\label{prp:ab}
Consider two weighted graphs of order $N$ with cospectral Laplacian matrices $Q$ and $Q'$.
Then $aQ+b(J_N-NI_N)$ and $aQ'+b(J_N-NI_N)$ are also cospectral Laplacian matrices for any real numbers $a$ and $b$, 
where $J_N$ is the $N\times N$ all-one matrix and $I_N$ is the identity matrix of order $N$.
\end{proposition}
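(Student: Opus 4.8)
The plan is to verify two claims in turn: that $aQ+b(J_N-NI_N)$ is again a Laplacian matrix, and that the two such matrices are cospectral. Throughout I would rely on the standard fact (immediate from the definition $Q=\Delta-A$ in the paper) that a real symmetric matrix is the Laplacian of some weighted graph precisely when all of its row sums vanish; the edge weights are then the negated off-diagonal entries, and the zero row-sum condition forces the diagonal to be the weighted degrees.

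For the first claim I would note that $Q$ is symmetric with $Q\1=0$, while $J_N-NI_N$ is symmetric with $(J_N-NI_N)\1=N\1-N\1=0$. Hence $aQ+b(J_N-NI_N)$ is symmetric with zero row sums for every $a,b\in\R$, so it is a Laplacian matrix, and likewise for $Q'$. Concretely it is the Laplacian of the graph whose edge weights are $a$ times the original weights minus $b$.

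For cospectrality the key point is that $\1$ is a shared eigenvector. Since $Q\1=0$ and $\1^TQ=0$ by symmetry, and $J_N=\1\1^T$, we get $QJ_N=J_NQ=O$, so $Q$ commutes with $J_N-NI_N$. These two commuting symmetric matrices therefore admit a common orthonormal eigenbasis containing $\1$. On $\1$ both matrices vanish, contributing eigenvalue $0$. On the orthogonal complement $\1^\perp$ the matrix $J_N$ acts as zero, so $J_N-NI_N$ acts as the scalar $-N$; thus if $\lambda_2,\ldots,\lambda_N$ denote the eigenvalues of $Q$ restricted to $\1^\perp$, the spectrum of $aQ+b(J_N-NI_N)$ is $\{0\}\cup\{a\lambda_k-bN : 2\le k\le N\}$.

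To finish I would observe that this spectrum is a function only of $a$, $b$, $N$ and the multiset $\{\lambda_2,\ldots,\lambda_N\}$, which is the spectrum of $Q$ with the single eigenvalue $0$ belonging to $\1$ removed. Because $Q$ and $Q'$ are cospectral and both carry $\1$ as an eigenvector for eigenvalue $0$, these reduced multisets coincide, so the two transformed spectra are equal. The one step needing care is exactly this last reduction: it is legitimate precisely because $J_N-NI_N$ acts as a single scalar on all of $\1^\perp$, which makes the transformation of the eigenvalues depend on the multiset of eigenvalues of $Q$ alone and not on any particular choice of eigenbasis.
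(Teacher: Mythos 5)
Your proof is correct and follows essentially the same route as the paper: both arguments diagonalize $Q$ and $J_N$ simultaneously on a basis containing $\1$ and read off the transformed spectrum $0,\ a\mu_2-bN,\ldots,\ a\mu_N-bN$. Your extra steps (checking that $aQ+b(J_N-NI_N)$ has zero row sums and is hence a Laplacian, and making explicit why removing the eigenvalue $0$ attached to $\1$ from both spectra is legitimate) are sound refinements of details the paper leaves implicit.
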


\begin{proof}
Let $v_1,\ldots,v_N$ be an orthogonal basis of eigenvectors for Laplacian $Q$ corresponding to eigenvalues 
$\mu_1,\mu_2,\ldots,\mu_N$, such that $\mu_1=0$ and $v_1=\1$ is the all-ones vector.
Then $v_1,\ldots,v_N$ is also a set of eigenvectors for $J_N$ with eigenvalues $N,0,\ldots,0$.
Therefore $aQ+b(J_N-NI_N)$ has eigenvalues $0,a\mu_2-bN,\ldots,a\mu_N-bN$.
Since $Q'$ has the same spectrum as $Q$, we find by the same argument that 
$aQ'+b(J_N-NI_N)$ also has eigenvalues $0,a\mu_2-bN,\ldots,a\mu_N-bN$.
\end{proof}

\begin{theorem}\label{the:3w}
    Consider two threshold graphs $G_W$ and $G_{W'}$ with $N$ nodes and weight vectors $W$ and $W'$.
    Suppose that all weights are taken from $\{x_1,x_2,x_3\}$.
    If $G_W$ and $G_{W'}$ have cospectral Laplacian matrices, then $G_W$ and $G_{W'}$ are isomorphic.
\end{theorem}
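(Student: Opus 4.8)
The plan is to prove the sharper statement that, under the three-weight hypothesis, cospectrality forces $W=W'$; since equal weight vectors give identical Laplacian matrices, isomorphism follows at once. That this is the correct target can be seen from the fact that a weighted threshold graph determines its weight vector \emph{up to its ordering}: two nodes $i<j$ have equal off-diagonal rows in $A$ exactly when $w_i=\cdots=w_j$, so within each maximal run of equal consecutive weights every permutation is an automorphism, while the final run is characterized intrinsically as the set of nodes all of whose incident edges carry one common weight. Deleting that run returns the weighted threshold graph on the remaining nodes, and induction recovers the ordered run-structure, hence $W$, from the isomorphism type. Thus $G_W\cong G_{W'}\iff W=W'$, and it suffices to prove $W=W'$.

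First I would invoke Proposition~\ref{prp:ab}. Since the all-ones weight vector gives $Q_{\1}=NI_N-J_N$, the affine substitution $W\mapsto aW-b\1$ is exactly $Q\mapsto aQ+b(J_N-NI_N)$, so by Proposition~\ref{prp:ab} it preserves both cospectrality and (being a relabelling-free change of the weight \emph{values}) the isomorphism type; this lets me normalize the three values conveniently. If at most two distinct weights occur, normalization reduces $W$ to a $0/1$ vector, i.e.\ to an ordinary unweighted threshold graph, and the claim becomes the classical fact that a threshold graph is determined by its Laplacian spectrum (see \cite{mahadev1995threshold,hammer1996laplacian}); for $0/1$ vectors the essentially unique creation sequence then gives $\chi=\chi'$ as ordered vectors. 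This is the base case.

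For three distinct values the key device is to split the spectrum into the spectra of the three $0/1$ ``projections''. Let $\chi^c\in\{0,1\}^{N-1}$ be the indicator of the positions carrying the value $x_c$. By linearity of $W\mapsto Q_W$ and Theorem~\ref{the:th}, for $2\le i\le N$ we have $\mu_i=\sum_{c}x_c\,\eta^c_i$, where $\eta^c=U\chi^c$ is the nonzero part of the Laplacian spectrum of the unweighted threshold graph $G_{\chi^c}$, and $\eta^1_i+\eta^2_i+\eta^3_i=N$ because $U\1=N\1$. Hence every eigenvalue sits on a two-dimensional integer lattice, $\mu_i-x_3N=(x_1-x_3)\eta^1_i+(x_2-x_3)\eta^2_i$ with $\eta^1_i,\eta^2_i\ge 0$ and $\eta^1_i+\eta^2_i\le N$. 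If I can recover the pair-multiset $\{(\eta^1_i,\eta^2_i)\}$ from the real multiset $\{\mu_i\}$, then $\{\eta^1_i\}$ and $\{\eta^2_i\}$ agree for $G_W$ and $G_{W'}$; these are the Laplacian spectra of $G_{\chi^1}$ and $G_{\chi^2}$, so the base case gives $\chi^1=\chi'^1$ and $\chi^2=\chi'^2$, whence $\chi^3=\1-\chi^1-\chi^2$ also agrees and $W=W'$.

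The only step that is not automatic — and the place where the bound ``three'' is essential — is this separation of the two integer components. It is immediate when $x_1-x_3$ and $x_2-x_3$ are linearly independent over $\mathbb{Q}$, since then $(p,q)\mapsto(x_1-x_3)p+(x_2-x_3)q$ is injective on $\{(p,q):0\le p,q,\ p+q\le N\}$ and the pair-multiset is read off directly. The hard part, which I expect to be the technical heart of the theorem, is the degenerate case in which $(x_1-x_3)/(x_2-x_3)$ is a rational of small height: then distinct lattice points can share a value, the projection onto $\{\mu_i\}$ is no longer injective, and one must use extra rigidity to break ties. Here I would exploit that each $\{\eta^c_i\}$ is the conjugate of the degree sequence of a threshold graph, hence a staircase multiset in $\{0,\dots,N\}$; I would identify the lattice point of the extreme eigenvalue from this staircase shape, peel it off, and induct. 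Establishing that this tie-breaking always succeeds for three values (and locating the four-value counterexamples it must avoid) is, I believe, the main obstacle, whereas the reduction to the unweighted theorem via the projections is routine.
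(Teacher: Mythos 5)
There is a genuine gap, and you name it yourself: the entire argument hinges on recovering the pair-multiset $\{(\eta^1_i,\eta^2_i)\}$ from the real multiset $\{\mu_i\}$, and you only establish this when $x_1-x_3$ and $x_2-x_3$ are linearly independent over $\mathbb{Q}$. The degenerate case is not a corner case to be deferred --- it contains the most important instances of the theorem. For the signed case $\{x_1,x_2,x_3\}=\{-1,0,1\}$ one has $x_1-x_3=-2$ and $x_2-x_3=-1$, and already $(1,0)$ and $(0,2)$ give the same value $-2$, so distinct lattice points genuinely collide and the pair-multiset is \emph{not} determined by the eigenvalue multiset via values alone. Your proposed repair (exploit the staircase shape of each $\eta^c$, peel off the extreme eigenvalue's lattice point, induct) is only a sketch of a strategy, and you concede it is ``the main obstacle''; as written, the proof does not go through. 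The surrounding scaffolding (normalization via Proposition~\ref{prp:ab}, the decomposition $\mu=\sum_c x_c U(\chi^c)^T$, the reduction of the two-value case to unweighted threshold graphs being determined by their Laplacian spectrum) is sound, but the theorem is not proved.

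It is worth contrasting this with how the paper avoids the problem entirely: it never tries to recover the full three-way decomposition at once. After normalizing so that $x_1=-1\le x_2\le x_3=1$, it uses only the structure of $U$ in Eq.~(\ref{eq:th''}) --- nonnegative entries, all row sums equal to $N$, strictly positive last column --- to show that $\mu_{\max}=N$ if and only if $w_N=1$, that $\mu_{\min}=-N$ if and only if $w_N=-1$, and that otherwise $w_N=x_2$. This pins down the single weight $w_N=w'_N$ from the extreme eigenvalues alone; by Theorem~\ref{the:th} one may then delete node $N$ (removing the eigenvalue $Nw_N$ and shifting the rest by $w_N$) and induct on $N$. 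In other words, the ``peeling'' you gesture at in your last paragraph can be done one coordinate at a time using only $\mu_{\max}$ and $\mu_{\min}$, with no need to resolve lattice collisions, and this is precisely where the hypothesis of at most three weights enters: with four or more values, knowing $\mu_{\max}\ne N$ and $\mu_{\min}\ne -N$ no longer determines $w_N$. If you want to salvage your global-decomposition route, you would need to supply the tie-breaking argument in full; the local, last-node argument is both shorter and already complete.
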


\begin{proof} 
For convenience. we first apply Proposition~\ref{prp:ab} and assume without loss of generality that
$-1=x_1\leq x_2\leq x_3=1$.
Suppose $G_W$ and $G_{W'}$ both have spectrum $\{0,\mu_2,\ldots,\mu_N\}$,
and define $\mu_{max}=\max\{\mu_2,\ldots,\mu_N\}$ and $\mu_{min}=\min\{\mu_2,\ldots,\mu_N\}$.
We apply Eq.~(\ref{eq:th''}).
The matrix $U$ has all row sums equal to $N$, no negative entries, and only positive entries in the last column. 
This implies that $U\1=N\1$, and $Uv<N\1$ for every vector $v<\1$ (recall that $\1$ is the all-one vector). 
Suppose $w_N=1$.
Then clearly $W^T\leq \1$, $\mu^T=UW^T\leq N\1$, and $\mu_N=N$. 
This implies that $\mu_{max}=N$.
Next, suppose $w_N<1$, then $W^T<\1$ and, therefore, $UW^T<N\1$, hence $\mu_{max}<N$.
Thus, we conclude that $w_N=1$ if and only if $\mu_{max}=N$.
Similarly, $w_N=-1$ if and only if $\mu_{min}=-N$. Therefore, 
$w_N=x_2$ if and only if $-N<\mu_{min}$ and $\mu_{max}<N$.
The same holds for $G_{W'}$ and, hence, $w_N=w'_N$.
Now, the result follows by deleting node $N$ and applying induction.
\end{proof}

If the weights take more than three distinct values, then two weighted threshold graphs with cospectral Laplacian matrices need not be isomorphic as is shown by the following two matrices (both have spectrum $\{0,0,6\}$).
\begin{equation*}
    \begin{bmatrix}
    3 & -3 & 0 \\
    -3 & 3 & 0 \\
    0 & 0 & 0\\
    \end{bmatrix},
    \begin{bmatrix}
    1 & 1 & -2\\
    1 & 1 & -2 \\
    -2 & -2 & 4\\
    \end{bmatrix}.
\end{equation*}

If $x_1=-1$, $x_2=0$, and $x_3=1$, then the considered weighted threshold graphs are signed graphs. 
Signed threshold graphs are studied in \cite{andjelic2023signed}, 
where the name {\lq}net-Laplacian{\rq} instead of {\lq}Laplacian{\rq} is used.
(because the Laplacian matrix of a signed graph has a different meaning). 
Two signed threshold graphs with cospectral net-Laplacian matrices must be isomorphic \cite{andjelic2023signed}.
Our theorem generalizes the result in \cite{andjelic2023signed} to three possible weights with arbitrary real value.

Theorem~\ref{the:3w} does not imply that in case of three weights, a weighted threshold graph is determined by its Laplacian spectrum.
There could be a weighted graph with the same weights and the same Laplacian spectrum, which is not a threshold graph. 
However, we do not expect that this will happen.
It is known \cite{hammer1996laplacian} that unweighted threshold graphs are determined by their Laplacian spectrum.
This means that any unweighted graph with the same Laplacian spectrum as 
a threshold graph $G$ must be isomorphic to $G$.
Proposition~\ref{prp:ab} shows that in the case of only two possible weights, a weighted threshold graph is also determined by its Laplacian spectrum.
In particular, if all weights are $\pm 1$ the matrix is known as the Seidel Laplacian,
and thus we conclude that threshold graphs are determined by the spectrum
of the Seidel Laplacian matrix.
\\

\noindent
{\bf Acknowledgement.}
We thank a referee of an earlier version of this paper for pointing at reference~\cite{andelic2025laplacian}.

\bibliographystyle{plain}
\bibliography{tg}

\end{document}